\documentclass[12pt]{amsart}
\textwidth=15.5cm
\textheight=23.0cm
\voffset=-2.0cm
\hoffset=-1.4cm
\usepackage{amssymb}
\usepackage{amsmath}
\usepackage{mathrsfs}
\usepackage{amsthm}
\usepackage{enumerate}
\newtheorem{theorem}{Theorem}
\theoremstyle{definition}
\newtheorem{remark}{Remark}
\newtheorem{example}{Example}
\begin{document}
\allowdisplaybreaks
\title{Estimation of expected value of function of i.i.d.~Bernoulli random variables}
\author{March T.~Boedihardjo}
\address{Department of Mathematics, University of California, Los Angeles, CA 90095-1555}
\email{march@math.ucla.edu}
\keywords{Expected value, Random Schr\"odinger operator}
\subjclass[2010]{60-08}
\begin{abstract}
We estimate the expected value of certain function $f:\{-1,1\}^{n}\to\mathbb{R}$. For example, with computer assistance, we show that if $\Delta$ is the Laplacian of the Cayley graph of $(\mathbb{Z}/15\mathbb{Z})\times(\mathbb{Z}/15\mathbb{Z})$ and $D$ is a diagonal $225\times 225$ matrix with entries chosen independently and uniformly from $\{-1,1\}$, then the expected value of the normalized trace of $(2I+D-\Delta)^{-1}$ is between $0.2006$ and $0.2030$.
\end{abstract}
\maketitle
Let $n\in\mathbb{N}$ and let $f:\{-1,1\}^{n}\to\mathbb{R}$. The expected value of $f$ is
\[\mathbb{E}f=\frac{1}{2^{n}}\sum_{\epsilon_{1},\ldots,\epsilon_{n}\in\{-1,1\}}f(\epsilon_{1},\ldots,\epsilon_{n}).\]
The computational complexity for calculating $\mathbb{E}f$ is $2^{n}$ times the cost of computing each $f(\epsilon_{1},\ldots,\epsilon_{n})$ plus the cost of averaging. When $f$ is regular enough, one can take samples $(X_{1}^{(i)},\ldots,X_{n}^{(i)})$, for $i=1,\ldots,p$, where $p$ is large enough ($p=1$ suffice when $f$ has concentration property \cite{McDiarmid},\cite{Talagrand}), and then with high probability, $\mathbb{E}f$ is close to $\frac{1}{p}\sum_{i=1}^{p}f(X_{1}^{(i)},\ldots,X_{n}^{(i)})$. In this case, the computational complexity reduces to $p$ times the cost of computing a value of $f(\epsilon_{1},\ldots,\epsilon_{n})$ plus the cost of averaging but with the trade off of having some risk.

In this paper, we examine the question of how much computational cost is needed in order to assert an estimation of $\mathbb{E}f$ up to an error of $\delta>0$. Here, ``assert" means establish, say, a theorem that $|\mathbb{E}f-1.5|\leq 0.1$ rather than making a statement that ``we are 90\% confident that $|\mathbb{E}f-1.5|\leq 0.1$."

We show that if $f$ has nonnegative Fourier coefficients and
\begin{equation}\label{eq1}
|f(\epsilon_{1},\ldots,\epsilon_{n})-f(\epsilon_{1},\ldots,\epsilon_{r-1},-\epsilon_{r},\epsilon_{r+1},\ldots,\epsilon_{n})|\leq\frac{c}{n},
\end{equation}
for all $\epsilon_{1},\ldots,\epsilon_{n}\in\{-1,1\}$ and $r=1,\ldots,n$, then with at least $90\%$ chance, one is able to assert an estimation of $\mathbb{E}f$, up to an error of at most $\frac{10c}{2p}$, with computational complexity being $(n+1)p^{2}$ times the cost of computing a value of $f(\epsilon_{1},\ldots,\epsilon_{n})$ plus the cost of some summations. (For the proof, see Remark \ref{rem1} below.)

The Laplacian $\Delta$ of a finite graph $G=(V,E)$ is the linear transformation on $\mathbb{C}^{V}$ defined by
\[(\Delta f)(v)=\sum_{w}(f(w)-f(v)),\]
for $f:V\to\mathbb{C}$, where the summation is over all adjacent vertices $w$ of $v$.

Suppose that $G$ is a given graph with $n$ vertices and $\lambda>0$ are given. Let $D$ be an $n\times n$ diagonal matrix with diagonal entries chosen independently and uniformly from $\{-1,1\}$. In general, it is a nontrivial task to estimate the expected eigenvalue distribution of the random Schr\"odinger operator $\lambda D-\Delta$ even when $G=(\mathbb{Z}/n\mathbb{Z})^{2}$ (see, e.g., \cite{Aizenman}).

We show that for given $\delta,\gamma>0$, with at least $90\%$ chance, one is able to assert an estimation of
\[\frac{1}{n}\mathbb{E}\circ\mathrm{Tr}[((\lambda+\gamma)I+\lambda D-\Delta)^{-1}],\]
up to an error of $\delta>0$, with computational complexity $\frac{Cn^{4}\lambda^{2}}{\delta^{2}\gamma^{4}}$, where $C>0$ is a universal constant and $\mathrm{Tr}$ is the trace. In particular, with computer assistance, we show that if $\Delta$ is the Laplacian of the Cayley graph of $(\mathbb{Z}/15\mathbb{Z})\times(\mathbb{Z}/15\mathbb{Z})$ and $D$ is a diagonal $225\times 225$ matrix with entries chosen independently and uniformly from $\{-1,1\}$, then
\[0.2006\leq\frac{1}{225}\mathbb{E}\circ\mathrm{Tr}[(2I+D-\Delta)^{-1}]\leq 0.2030.\]
(See Example \ref{example1}.)

Throughout this paper, if $n\in\mathbb{N}$ then $[n]=\{1,\ldots,n\}$; and if $A$ is an $n\times n$ matrix then $\mathrm{tr}\,A$ is $\frac{1}{n}$ times the trace of $A$. The following is the main result of this paper.
\begin{theorem}\label{thm}
Suppose that $f:\{-1,1\}^{n}\to\mathbb{R}$ and there exist $a_{S}\in[0,\infty)$, for $S\subset [n]$, such that
\[f(\epsilon_{1},\ldots,\epsilon_{n})=\sum_{S\subset[n]}a_{S}\prod_{k\in S}\epsilon_{k},\]
for all $\epsilon_{1},\ldots,\epsilon_{n}\in\{-1,1\}$. Define $g:\{-1,1\}^{n}\to\mathbb{R}$ by
\[g(\epsilon_{1},\ldots,\epsilon_{n})=\frac{1}{2}\sum_{r=1}^{n}(f(\epsilon_{1},\ldots,\epsilon_{n})-
f(\epsilon_{1},\ldots,\epsilon_{r-1},-\epsilon_{r},\epsilon_{r+1},\ldots,\epsilon_{n})).\]
Let $p\in\mathbb{N}$. For $i=1,\ldots,p$, let $X_{1}^{(i)},\ldots,X_{n}^{(i)}\in\{-1,1\}$. Then
\[0\leq\frac{1}{p^{2}}\sum_{i,j\in[p]}f(X_{1}^{(i)}X_{1}^{(j)},\ldots,X_{n}^{(i)}X_{n}^{(j)})-\mathbb{E}f\leq
\frac{1}{p^{2}}\sum_{i,j\in[p]}g(X_{1}^{(i)}X_{1}^{(j)},\ldots,X_{n}^{(i)}X_{n}^{(j)})\]
Moreover, if $X_{k}^{(i)}$, for $i=1,\ldots,p$ and $k=1,\ldots,n$, are chosen independently and uniformly from $\{-1,1\}$, then
\[\mathbb{E}\frac{1}{p^{2}}\sum_{i,j\in[p]}g(X_{1}^{(i)}X_{1}^{(j)},\ldots,X_{n}^{(i)}X_{n}^{(j)})=\frac{1}{p}g(1,\ldots,1).\]
\end{theorem}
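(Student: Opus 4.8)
The plan is to deduce all three assertions from a single Fourier identity. For an arbitrary $h:\{-1,1\}^{n}\to\mathbb{R}$ with expansion $h(\epsilon_{1},\ldots,\epsilon_{n})=\sum_{S\subset[n]}b_{S}\prod_{k\in S}\epsilon_{k}$, substitute $\epsilon_{k}=X_{k}^{(i)}X_{k}^{(j)}$ and sum over $i,j\in[p]$. Because $\prod_{k\in S}X_{k}^{(i)}X_{k}^{(j)}=\bigl(\prod_{k\in S}X_{k}^{(i)}\bigr)\bigl(\prod_{k\in S}X_{k}^{(j)}\bigr)$, the double sum over $i$ and $j$ factors, and one obtains
\[\frac{1}{p^{2}}\sum_{i,j\in[p]}h\bigl(X_{1}^{(i)}X_{1}^{(j)},\ldots,X_{n}^{(i)}X_{n}^{(j)}\bigr)=\sum_{S\subset[n]}b_{S}\Bigl(\frac{1}{p}\sum_{i=1}^{p}\prod_{k\in S}X_{k}^{(i)}\Bigr)^{2}.\]
Establishing this is the first step; the only thing to check is the interchange of the two finite summations, which is routine.

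Next I would record the Fourier expansion of $g$. Negating the $r$-th coordinate multiplies $\prod_{k\in S}\epsilon_{k}$ by $-1$ precisely when $r\in S$, so the $r$-th term in the definition of $g$ equals $\sum_{S\ni r}a_{S}\prod_{k\in S}\epsilon_{k}$, and summing over $r$ gives $g(\epsilon_{1},\ldots,\epsilon_{n})=\sum_{S\subset[n]}|S|\,a_{S}\prod_{k\in S}\epsilon_{k}$. Thus $g$ again has nonnegative Fourier coefficients, its $S=\emptyset$ coefficient is $0$, and $\mathbb{E}f=a_{\emptyset}$.

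Then I would apply the identity twice. With $h=f$ (coefficients $a_{S}\geq0$): the $S=\emptyset$ term of the right-hand side is $a_{\emptyset}=\mathbb{E}f$ and every other term is nonnegative, which is the left inequality. Subtracting $\mathbb{E}f$ deletes that one term, and since $|S|\geq1$ for every nonempty $S$ while each $a_{S}\bigl(\frac{1}{p}\sum_{i}\prod_{k\in S}X_{k}^{(i)}\bigr)^{2}\geq0$, the remaining sum is dominated term by term by the $h=g$ instance of the identity; that is the right inequality. For the last equality, take expectations in the $h=g$ identity with the $X_{k}^{(i)}$ i.i.d.\ uniform on $\{-1,1\}$: for nonempty $S$ the variables $\prod_{k\in S}X_{k}^{(1)},\ldots,\prod_{k\in S}X_{k}^{(p)}$ are independent with mean $0$ and square $1$, so $\mathbb{E}\bigl(\frac{1}{p}\sum_{i=1}^{p}\prod_{k\in S}X_{k}^{(i)}\bigr)^{2}=\frac{1}{p}$, and hence the expectation equals $\sum_{S\neq\emptyset}|S|a_{S}\cdot\frac{1}{p}=\frac{1}{p}g(1,\ldots,1)$.

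I do not expect a real obstacle here: the substance is the squaring identity of the first step, and the only delicate point is the expectation computation, where one uses that for $i\neq j$ the two products depend on disjoint blocks of the independent variables, so the cross terms vanish, while the $S=\emptyset$ block contributes nothing because its coefficient $|S|$ is zero.
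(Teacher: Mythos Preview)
Your proposal is correct and follows essentially the same route as the paper: the squaring identity for the double sum, the Fourier expansion $g=\sum_{S}|S|a_{S}\prod_{k\in S}\epsilon_{k}$, the termwise bound via $|S|\geq1$, and the variance computation. The only cosmetic difference is in the last step, where the paper argues that the off-diagonal terms $i\neq j$ have $\mathbb{E}g(X_{1}^{(i)}X_{1}^{(j)},\ldots)=0$ directly (leaving the $p$ diagonal terms $g(1,\ldots,1)$), while you take the expectation inside the squaring identity and use $\mathbb{E}\bigl(\frac{1}{p}\sum_{i}\prod_{k\in S}X_{k}^{(i)}\bigr)^{2}=\frac{1}{p}$; these are two phrasings of the same calculation.
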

\begin{remark}\label{rem1}
In the context of Theorem \ref{thm}, if (\ref{eq1}) above is satisfied, we have $g(1,\ldots,1)\leq\frac{c}{2}$ and by Markov's inequality, if $X_{k}^{(i)}$, for $i=1,\ldots,p$ and $k=1,\ldots,n$, are chosen independently and uniformly from $\{-1,1\}$, then with at least $90\%$ chance,
\begin{equation}\label{eq2}
\frac{1}{p^{2}}\sum_{i,j\in[p]}g(X_{1}^{(i)}X_{1}^{(j)},\ldots,X_{n}^{(i)}X_{n}^{(j)})\leq\frac{10c}{2p}.
\end{equation}
In this case, one obtains the following estimation of $\mathbb{E}f$.
\[0\leq\frac{1}{p^{2}}\sum_{i,j\in[p]}f(X_{1}^{(i)}X_{1}^{(j)},\ldots,X_{n}^{(i)}X_{n}^{(j)})-\mathbb{E}f\leq\frac{10c}{2p}.\]
In this estimation, the main cost comes from verifying that (\ref{eq2}) is satisfied, which could happen to fail. The cost of computing a value of $g(\epsilon_{1},\ldots,\epsilon_{n})$ is $(n+1)$ times the cost of computing a value of $f(\epsilon_{1},\ldots,\epsilon_{n})$ plus some cost of summations. Thus, the cost to verify (\ref{eq2}) is $(n+1)p^{2}$ times the cost of computing a value of $f(\epsilon_{1},\ldots,\epsilon_{n})$ plus some cost of summations.

Since one has already calculated $f(X_{1}^{(i)}X_{1}^{(j)},\ldots,X_{n}^{(i)}X_{n}^{(j)})$ before obtaining the value of $g(X_{1}^{(i)}X_{1}^{(j)},\ldots,X_{n}^{(i)}X_{n}^{(j)})$, the total cost of estimating $\mathbb{E}f$ is again $(n+1)p^{2}$ times the cost of computing a value of $f(\epsilon_{1},\ldots,\epsilon_{n})$ plus some cost of summations, if (\ref{eq2}) happens to be satisfied.
\end{remark}
\begin{proof}[Proof of Theorem \ref{thm}]
We have
\begin{eqnarray*}
\frac{1}{p^{2}}\sum_{i,j\in[p]}f(X_{1}^{(i)}X_{1}^{(j)},\ldots,X_{n}^{(i)}X_{n}^{(j)})&=&
\frac{1}{p^{2}}\sum_{i,j\in[p]}\sum_{S\subset[n]}a_{S}\prod_{k\in S}(X_{k}^{(i)}X_{k}^{(j)})\\&=&
\sum_{S\subset[n]}a_{S}\left(\frac{1}{p}\sum_{i\in[p]}\prod_{k\in S}X_{k}^{(i)}\right)^{2}.
\end{eqnarray*}
Since $a_{S}\geq 0$ for all $S\subset [n]$, it follows that
\[\mathbb{E}f=a_{\emptyset}\leq\frac{1}{p^{2}}\sum_{i,j\in[p]}f(X_{1}^{(i)}X_{1}^{(j)},\ldots,X_{n}^{(i)}X_{n}^{(j)}),\]
and
\begin{align}\label{eq3}
&\frac{1}{p^{2}}\sum_{i,j\in[p]}f(X_{1}^{(i)}X_{1}^{(j)},\ldots,X_{n}^{(i)}X_{n}^{(j)})-a_{\emptyset}\\=&
\sum_{\substack{S\subset[n]\\S\neq\emptyset}}a_{S}\left(\frac{1}{p}\sum_{i\in[p]}\prod_{k\in S}X_{k}^{(i)}\right)^{2}\nonumber\\\leq&
\sum_{S\subset[n]}|S|a_{S}\left(\frac{1}{p}\sum_{i\in[p]}\prod_{k\in S}X_{k}^{(i)}\right)^{2}\nonumber\\=&
\frac{1}{p^{2}}\sum_{i,j\in[p]}\sum_{S\subset[n]}|S|a_{S}\prod_{k\in S}(X_{k}^{(i)}X_{k}^{(j)}),\nonumber
\end{align}
where $|S|$ is the size of the set $S$. For $r=1,\ldots,n$,
\[f(\epsilon_{1},\ldots,\epsilon_{n})-f(\epsilon_{1},\ldots,\epsilon_{r-1},-\epsilon_{r},
\epsilon_{r+1},\ldots,\epsilon_{n})=2\sum_{\substack{S\subset[n]\\r\in S}}a_{S}\prod_{k\in S}\epsilon_{k},\]
so
\[g(\epsilon_{1},\ldots,\epsilon_{n})=\sum_{S\subset[n]}|S|a_{S}\prod_{k\in S}\epsilon_{k}.\]
Therefore, by (\ref{eq3}),
\[\frac{1}{p^{2}}\sum_{i,j\in[p]}f(X_{1}^{(i)}X_{1}^{(j)},\ldots,X_{n}^{(i)}X_{n}^{(j)})-\mathbb{E}f\leq
\frac{1}{p^{2}}\sum_{i,j\in[p]}g(X_{1}^{(i)}X_{1}^{(j)},\ldots,X_{n}^{(i)}X_{n}^{(j)}).\]

Note that $\mathbb{E}g=0$. Thus, if $X_{1}^{(i)},\ldots,X_{n}^{(i)}$, for $i=1,\ldots,p$, are chosen independently and uniformly from $\{-1,1\}$, then for every $i\in [p]$ and every $j\in [p]$ such that $i\neq j$, the random variables $X_{1}^{(i)}X_{1}^{(j)},\ldots,X_{n}^{(i)}X_{n}^{(j)}$ are independent and uniformly distributed on $\{-1,1\}$, so
$\mathbb{E}g(X_{1}^{(i)}X_{1}^{(j)},\ldots,X_{n}^{(i)}X_{n}^{(j)})=0$. Thus,
\[\mathbb{E}\frac{1}{p^{2}}\sum_{i,j\in[p]}g(X_{1}^{(i)}X_{1}^{(j)},\ldots,X_{n}^{(i)}X_{n}^{(j)})=\frac{1}{p}g(1,\ldots,1).\]
\end{proof}
\begin{example}\label{example1}
Let $\gamma,\lambda>0$. Suppose that $A$ is the adjacency matrix of a graph $G$ with $n$ vertices and $D_{G}$ is the diagonal matrix with diagonal entries being the degrees of the vertices of $G$. Then $\Delta=A-D_{G}$. For every $\epsilon=(\epsilon_{1},\ldots,\epsilon_{n})\in\{-1,1\}^{n}$, let $D_{\epsilon}$ be the $n\times n$ diagonal matrix with diagonal entries $\epsilon_{1},\ldots,\epsilon_{n}$. Define $f:\{-1,1\}^{n}\to\mathbb{R}$ by \[f(\epsilon_{1},\ldots,\epsilon_{n})=\mathrm{tr}[((\lambda+\gamma)I-\lambda D_{\epsilon}-\Delta)^{-1}],\]
where $\mathrm{tr}$ is $\frac{1}{n}$ times the trace. Note that since $(\epsilon_{1},\ldots,\epsilon_{n})$ has symmetric distribution,
\[\mathbb{E}f=\mathbb{E}\circ\mathrm{tr}[((\lambda+\gamma)I+\lambda D_{\epsilon}-\Delta)^{-1}].\]
Since $-\Delta$ is positive semidefinite, $\lambda I+\lambda D_{\epsilon}-\Delta$ is also positive semidefinite so \[|f(\epsilon_{1},\ldots,\epsilon_{n})-f(\epsilon_{1},\ldots,\epsilon_{r-1},-\epsilon_{r},\epsilon_{r+1},\ldots,\epsilon_{n})|\leq
\frac{2\lambda}{n\gamma^{2}},\]
for all $\epsilon_{1},\ldots,\epsilon_{n}\in\{-1,1\}$ and $r\in [n]$. Thus, if we define $g:\{-1,1\}^{n}\to\mathbb{R}$ by
\[g(\epsilon_{1},\ldots,\epsilon_{n})=\frac{1}{2}\sum_{r=1}^{n}(f(\epsilon_{1},\ldots,\epsilon_{n})-
f(\epsilon_{1},\ldots,\epsilon_{r-1},-\epsilon_{r},\epsilon_{r+1},\ldots,\epsilon_{n})),\]
then
\begin{equation}\label{eq4}
g(\epsilon_{1},\ldots,\epsilon_{n})\leq\frac{\lambda}{\gamma^{2}},
\end{equation}
for all $\epsilon_{1},\ldots,\epsilon_{n}\in\{-1,1\}$.

We have
\begin{align*}
&((\lambda+\gamma)I-\lambda D_{\epsilon}-\Delta)^{-1}\\=&((\lambda+\gamma)I+D_{G}-\lambda D_{\epsilon}-A)^{-1}\\=&
((\lambda+\gamma)I+D_{G})^{-1}[I-(\lambda D_{\epsilon}+A)((\lambda+\gamma)I+D_{G})^{-1}]^{-1}\\=&
\sum_{m=0}^{\infty}((\lambda+\gamma)I+D_{G})^{-1}[(\lambda D_{\epsilon}+A)((\lambda+\gamma)I+D_{G})^{-1}]^{m}.
\end{align*}
Since $D_{G}$ is diagonal with nonnegative entries, the entries of $((\lambda+\gamma)I+D_{G})^{-1}$ are nonnegative. The entries of $A$ are also nonnegative. Therefore, if we express $f(\epsilon_{1},\ldots,\epsilon_{n})$ as $\sum_{S\subset [n]}a_{S}\prod_{k\in S}\epsilon_{k}$, then $a_{S}\geq 0$ for all $S\subset [n]$.

By Theorem \ref{thm}, if $p\in\mathbb{N}$ and $X_{1}^{(i)},\ldots,X_{n}^{(i)}\in\{-1,1\}$, for $i=1,\ldots,p$, then
\[0\leq\frac{1}{p^{2}}\sum_{i,j\in[p]}f(X_{1}^{(i)}X_{1}^{(j)},\ldots,X_{n}^{(i)}X_{n}^{(j)})-\mathbb{E}f\leq
\frac{1}{p^{2}}\sum_{i,j\in[p]}g(X_{1}^{(i)}X_{1}^{(j)},\ldots,X_{n}^{(i)}X_{n}^{(j)})\]
and if the $X_{1}^{(i)},\ldots,X_{n}^{(i)}$ are chosen independently and uniformly from $\{-1,1\}$, then
\[\mathbb{E}\frac{1}{p^{2}}\sum_{i,j\in[p]}g(X_{1}^{(i)}X_{1}^{(j)},\ldots,X_{n}^{(i)}X_{n}^{(j)})\leq\frac{\lambda}{p\gamma^{2}},\]
by (\ref{eq4}). Since the cost of computing a value of $f(\epsilon_{1},\ldots,\epsilon_{n})$ is $Cn^{3}$, where the $C>0$ is universal constant, it follows that with at least $90\%$ chance, one is able to estimate $\mathbb{E}f$, up to an error of $\frac{10\lambda}{p\gamma^{2}}$, with computational complexity being $(n+1)p^{2}Cn^{3}$ plus the cost of some summations. Thus, for a given $\delta>0$, if we take $p$ to be the smallest integer greater than $\frac{10\lambda}{\gamma^{2}\delta}$, then we have that with at least $90\%$ chance, one is able to estimate $\mathbb{E}f$, up to an error of $\delta$, with computational complexity at most $\frac{C_{1}n^{4}\lambda^{2}}{\gamma^{4}\delta^{2}}$, where $C_{1}>0$ is a universal constant.

In the case when $G$ is the Cayley graph of $(\mathbb{Z}/15\mathbb{Z})\times(\mathbb{Z}/15\mathbb{Z})$ and $\gamma=\lambda=1$, i.e.,
\[f(\epsilon_{1},\ldots,\epsilon_{n})=\mathrm{tr}[(2I-D_{\epsilon}-\Delta))^{-1}],\]
if we take $p=30$, then with computer assistance, we obtain $0.2006\leq\mathbb{E}f\leq0.2030$.
\end{example}
It is easy to obtain the following result by adapting the proof of Theorem \ref{thm}.
\begin{theorem}\label{thm2}
For $S\subset [n]$, let $a_{S}\in\mathbb{C}$ and $b_{S}\in[0,\infty)$ be such that $|a_{S}|\leq b_{S}$. Define functions $f_{1},f_{2},g$ from $\{-1,1\}^{n}$ to $\mathbb{C}$ by
\[f_{1}(\epsilon_{1},\ldots,\epsilon_{n})=\sum_{S\subset[n]}a_{S}\prod_{k\in S}\epsilon_{k},\quad f_{2}(\epsilon_{1},\ldots,\epsilon_{n})=\sum_{S\subset[n]}b_{S}\prod_{k\in S}\epsilon_{k},\]
\[g(\epsilon_{1},\ldots,\epsilon_{n})=\frac{1}{2}\sum_{r=1}^{n}(f_{2}(\epsilon_{1},\ldots,\epsilon_{n})-
f_{2}(\epsilon_{1},\ldots,\epsilon_{r-1},-\epsilon_{r},\epsilon_{r+1},\ldots,\epsilon_{n})),\]
for $\epsilon_{1},\ldots,\epsilon_{n}\in\{-1,1\}$. Let $p\in\mathbb{N}$. For $i=1,\ldots,p$, let $X_{1}^{(i)},\ldots,X_{n}^{(i)}\in\{-1,1\}$. Then
\[\left|\frac{1}{p^{2}}\sum_{i,j\in[p]}f_{1}(X_{1}^{(i)}X_{1}^{(j)},\ldots,X_{n}^{(i)}X_{n}^{(j)})-\mathbb{E}f_{1}\right|\leq
\frac{1}{p^{2}}\sum_{i,j\in[p]}g(X_{1}^{(i)}X_{1}^{(j)},\ldots,X_{n}^{(i)}X_{n}^{(j)})\]
Moreover, if $X_{k}^{(i)}$, for $i=1,\ldots,p$ and $k=1,\ldots,n$, are chosen independently and uniformly from $\{-1,1\}$, then
\[\mathbb{E}\frac{1}{p^{2}}\sum_{i,j\in[p]}g(X_{1}^{(i)}X_{1}^{(j)},\ldots,X_{n}^{(i)}X_{n}^{(j)})=\frac{1}{p}g(1,\ldots,1).\]
\end{theorem}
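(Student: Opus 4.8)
The plan is to follow the proof of Theorem \ref{thm} almost verbatim, replacing the hypothesis ``$a_S\geq 0$'' by the domination ``$|a_S|\leq b_S$'' and inserting a single application of the triangle inequality at the one point where nonnegativity was previously used to discard an absolute value. First I would expand the symmetrized average as before: substituting the Fourier expansion of $f_1$ and using $\prod_{k\in S}(X_k^{(i)}X_k^{(j)})=\left(\prod_{k\in S}X_k^{(i)}\right)\left(\prod_{k\in S}X_k^{(j)}\right)$ gives
\[\frac{1}{p^{2}}\sum_{i,j\in[p]}f_{1}(X_{1}^{(i)}X_{1}^{(j)},\ldots,X_{n}^{(i)}X_{n}^{(j)})=\sum_{S\subset[n]}a_{S}\left(\frac{1}{p}\sum_{i\in[p]}\prod_{k\in S}X_{k}^{(i)}\right)^{2}.\]
Since $\mathbb{E}f_{1}=a_{\emptyset}$, subtracting leaves only the terms with $S\neq\emptyset$.

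The key step is then to apply the triangle inequality to this finite sum of complex numbers, using that each factor $\left(\frac{1}{p}\sum_{i\in[p]}\prod_{k\in S}X_{k}^{(i)}\right)^{2}$ is a nonnegative real number; bounding $|a_S|\leq b_S$ term by term and then enlarging the sum over $S\neq\emptyset$ to the full sum weighted by $|S|$ (legitimate since $|S|\geq 1$ for $S\neq\emptyset$ and the $S=\emptyset$ term contributes $0$) yields
\[\left|\frac{1}{p^{2}}\sum_{i,j\in[p]}f_{1}(X_{1}^{(i)}X_{1}^{(j)},\ldots,X_{n}^{(i)}X_{n}^{(j)})-\mathbb{E}f_{1}\right|\leq\sum_{S\subset[n]}|S|b_{S}\left(\frac{1}{p}\sum_{i\in[p]}\prod_{k\in S}X_{k}^{(i)}\right)^{2}.\]
Reversing the expansion, the right-hand side equals $\frac{1}{p^{2}}\sum_{i,j\in[p]}\sum_{S\subset[n]}|S|b_{S}\prod_{k\in S}(X_{k}^{(i)}X_{k}^{(j)})$, and exactly the computation in the proof of Theorem \ref{thm} (now applied to $f_2$) shows $g(\epsilon_{1},\ldots,\epsilon_{n})=\sum_{S\subset[n]}|S|b_{S}\prod_{k\in S}\epsilon_{k}$, because $f_{2}(\epsilon_{1},\ldots,\epsilon_{n})-f_{2}(\epsilon_{1},\ldots,-\epsilon_{r},\ldots,\epsilon_{n})=2\sum_{S\ni r}b_{S}\prod_{k\in S}\epsilon_{k}$. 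This gives the first inequality.

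For the ``Moreover'' statement, the argument is identical to that in Theorem \ref{thm}: the coefficient $|S|b_S$ vanishes for $S=\emptyset$, so $\mathbb{E}g=0$; when the $X_{k}^{(i)}$ are independent uniform signs, then for $i\neq j$ the products $X_{1}^{(i)}X_{1}^{(j)},\ldots,X_{n}^{(i)}X_{n}^{(j)}$ are again independent and uniform on $\{-1,1\}$, hence $\mathbb{E}g(X_{1}^{(i)}X_{1}^{(j)},\ldots,X_{n}^{(i)}X_{n}^{(j)})=0$, while for $i=j$ every argument equals $1$; summing over the $p$ diagonal pairs and dividing by $p^{2}$ leaves $\frac{1}{p}g(1,\ldots,1)$.

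I do not expect a genuine obstacle here; the only point requiring a little care is the order of operations in the middle step — the triangle inequality must be applied \emph{after} the ``perfect square'' structure has been exposed, so that the domination $|a_S|\leq b_S$ is used against a genuinely nonnegative quantity rather than against a signed sum — and everything else is a direct transcription of the already-given proof.
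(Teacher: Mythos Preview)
Your proposal is correct and is precisely the adaptation the paper has in mind: the paper does not write out a separate proof of Theorem~\ref{thm2} but only says it is obtained by adapting the proof of Theorem~\ref{thm}, and your write-up carries out exactly that adaptation, with the triangle inequality applied after exposing the perfect-square structure so that $|a_S|\leq b_S$ is used against nonnegative quantities.
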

\begin{example}
Same setting as Example \ref{example1}: $\gamma,\lambda>0$ and $A$ and $D$ are the adjacency and degree matrices of a graph $G$ with $n$ vertices, respectively. For every $\epsilon=(\epsilon_{1},\ldots,\epsilon_{n})\in\{-1,1\}^{n}$, let $D_{\epsilon}$ be the $n\times n$ diagonal matrix with diagonal entries $\epsilon_{1},\ldots,\epsilon_{n}$.

Let $d$ be the maximum degree of $G$. Let $h$ be an analytic function on a neighborhood of $\{z\in\mathbb{C}:|z-d|\leq d+\lambda+\gamma\}$. Define $f_{1}:\{-1,1\}^{n}\to\mathbb{C}$ and $f_{2}:\{-1,1\}^{n}\to\mathbb{C}$ by
\[f_{1}(\epsilon_{1},\ldots,\epsilon_{n})=\mathrm{tr}\,h(-\lambda D_{\epsilon}-\Delta),\]
and
\begin{align*}
&f_{2}(\epsilon_{1},\ldots,\epsilon_{n})\\=&\frac{d+\lambda+\gamma}{2\pi}\left(\int_{0}^{2\pi}|h(-d+(d+\lambda+\gamma)e^{it})|\,dt\right)
\mathrm{tr}[((\lambda+\gamma)I-\lambda D_{\epsilon}-\Delta)^{-1}].
\end{align*}
We want to estimate $\mathbb{E}f_{1}=\mathbb{E}\circ\mathrm{tr}\,h(\lambda D_{\epsilon}-\Delta)$.

We have
\begin{align*}
&f_{1}(\epsilon_{1},\ldots,\epsilon_{n})\\=&\mathrm{tr}\,h(-\lambda D_{\epsilon}+D_{G}-A)\\=&
\frac{1}{2\pi i}\oint_{C}h(z)\mathrm{tr}[(zI-(-\lambda D_{\epsilon}+D_{G}-A))^{-1}]\,dz\\=&
\frac{1}{2\pi i}\oint_{C}h(z)\mathrm{tr}[(zI-D_{G})^{-1}(I+(\lambda D_{\epsilon}+A)(zI-D_{G})^{-1})^{-1}]\,dz\\=&
\sum_{m=0}^{\infty}\frac{1}{2\pi i}\oint_{C}h(z)\mathrm{tr}[(zI-D_{G})^{-1}((\lambda D_{\epsilon}+A)(D_{G}-zI)^{-1})^{m}]\,dz,
\end{align*}
where $C=\{z\in\mathbb{C}:|z-d|=d+\lambda+\gamma\}$. Similarly,
\begin{align*}
&\mathrm{tr}[((\lambda+\gamma)I-\lambda D_{\epsilon}-\Delta)^{-1}]\\=&
\mathrm{tr}[(D_{G}+(\lambda+\gamma)I-\lambda D_{\epsilon}-A)^{-1}]\\=&
\mathrm{tr}[(D_{G}+(\lambda+\gamma)I)^{-1}(I-(\lambda D_{\epsilon}+A)(D_{G}+(\lambda+\gamma)I)^{-1})^{-1}]\\=&
\sum_{m=0}^{\infty}\mathrm{tr}[(D_{G}+(\lambda+\gamma)I)^{-1}((\lambda D_{\epsilon}+A)(D_{G}+(\lambda+\gamma)I)^{-1})^{m}].
\end{align*}
If $z$ is a point on the contour $C$ and $d_{v}$ is the degree of a vertex $v$, then
\[\frac{1}{|z-d_{v}|}\leq\frac{1}{|z-d|-(d-d_{v})}=\frac{1}{d_{v}+\lambda+\gamma}.\]
This means that the absolute values of the entries of $(zI-D_{G})^{-1}$ are most the corresponding entries of $(D_{G}+(\lambda+\gamma)I)^{-1}$. Therefore, if we express $f_{1}(\epsilon_{1},\ldots,\epsilon_{n})$ as $\sum_{S\subset [n]}a_{S}\prod_{k\in S}\epsilon_{k}$ and express $f_{2}(\epsilon_{1},\ldots,\epsilon_{n})$ as $\sum_{S\subset [n]}b_{S}\prod_{k\in S}\epsilon_{k}$, then we have $|a_{S}|\leq b_{S}$ for all $S\subset [n]$.

By Theorem \ref{thm2}, if
\[g(\epsilon_{1},\ldots,\epsilon_{n})=\frac{1}{2}\sum_{r=1}^{n}(f_{2}(\epsilon_{1},\ldots,\epsilon_{n})-
f_{2}(\epsilon_{1},\ldots,\epsilon_{r-1},-\epsilon_{r},\epsilon_{r+1},\ldots,\epsilon_{n})),\]
then for all $p\in\mathbb{N}$ and all choices of $X_{1}^{(i)},\ldots,X_{n}^{(i)}\in\{-1,1\}$, for $i=1,\ldots,p$, we have
\begin{equation}\label{eq5}
\left|\frac{1}{p^{2}}\sum_{i,j\in[p]}f_{1}(X_{1}^{(i)}X_{1}^{(j)},\ldots,X_{n}^{(i)}X_{n}^{(j)})-\mathbb{E}f_{1}\right|\leq
\frac{1}{p^{2}}\sum_{i,j\in[p]}g(X_{1}^{(i)}X_{1}^{(j)},\ldots,X_{n}^{(i)}X_{n}^{(j)}).
\end{equation}
If the $X_{k}^{(i)}$ are chosen independently and uniformly from $\{-1,1\}$, then
\begin{equation}\label{eq6}
\mathbb{E}\frac{1}{p^{2}}\sum_{i,j\in[p]}g(X_{1}^{(i)}X_{1}^{(j)},\ldots,X_{n}^{(i)}X_{n}^{(j)})=\frac{1}{p}g(1,\ldots,1).
\end{equation}
Since $-\Delta$ is positive semidefinite, we have
\begin{align*}
&|f_{2}(\epsilon_{1},\ldots,\epsilon_{n})-f_{2}(\epsilon_{1},\ldots,\epsilon_{r-1},-\epsilon_{r},\epsilon_{r+1},\ldots,\epsilon_{n})|\\\leq&
\frac{d+\lambda+\gamma}{2\pi}\left(\int_{0}^{2\pi}|h(-d+(d+\lambda+\gamma)e^{it})|\,dt\right)\frac{2\lambda}{n\gamma^{2}},
\end{align*}
for all $\epsilon_{1},\ldots,\epsilon_{n}\in\{-1,1\}$ and $r\in[n]$. Thus,
\[g(1,\ldots,1)\leq\frac{(d+\lambda+\gamma)\lambda}{2\pi\gamma^{2}}\left(\int_{0}^{2\pi}|h(-d+(d+\lambda+\gamma)e^{it})|\,dt\right).\]
By (\ref{eq6}), we have
\[\mathbb{E}\frac{1}{p^{2}}\sum_{i,j\in[p]}g(X_{1}^{(i)}X_{1}^{(j)},\ldots,X_{n}^{(i)}X_{n}^{(j)})\leq
\frac{1}{p}\frac{(d+\lambda+\gamma)\lambda}{2\pi\gamma^{2}}\left(\int_{0}^{2\pi}|h(-d+(d+\lambda+\gamma)e^{it})|\,dt\right).\]
Thus, if $p$ is large enough, then with high probability, one is able to assert a good estimation of $\mathbb{E}f_{1}$ using (\ref{eq5}).
\end{example}
{\bf Acknowledgement:} The author is supported by NSF DMS-1856221.


\begin{thebibliography}{00}
\bibitem{Aizenman} M. Aizenman and S. Warzel, {\it Random operators. Disorder effects on quantum spectra and dynamics}, Graduate Studies in Mathematics, 168 American Mathematical Society, Providence, RI, 2015.
\bibitem{McDiarmid} C. McDiarmid, {\it On the method of bounded differences}, Surveys in combinatorics, 148-188, Cambridge Univ. Press, Cambridge, 1989.
\bibitem{Talagrand} M. Talagrand, {\it Concentration of measure and isoperimetric inequalities in product spaces}, Inst. Hautes \'Etudes Sci. Publ. Math. No. 81 (1995), 73-205.
\end{thebibliography}
\end{document}